\newtheorem{teor}{Theorem}[section]
\newtheorem{defi}[teor]{Definition}
\newtheorem{lema}[teor]{Lemma}
\newtheorem{prop}[teor]{Proposition}
\newtheorem{cor}[teor]{Corollary}
\newtheorem{rem}[teor]{Remark}
\def\Hom{\mathop{\rm Hom}\nolimits}
\def\ov{\overline}
\def\Mod{\textrm{-}{\rm Mod}}
\def\Ext {\mathop{\rm Ext}\nolimits}
\def\Ker {\mathop{\rm Ker}\nolimits}
\def\Phantq{\mathbf{Phant}_R(\mathcal A_2)}
\def\Idealq{\mathcal I(\mathcal A_2)}
\def\Phant{\mathbf{Phant}(R)}
\def\Im {\mathop{\rm Im}\nolimits}
\begin{document}
\title{Covering Ideals of Morphisms and Module Representations of the quiver $\mathbb{A}_2$}
\author{Sergio Estrada}
\address{Departamento de Matem\'atica Aplicada, Universidad de Murcia, 30100 Murcia, Spain}
\email{sestrada@um.es}

\author{Pedro A. Guil Asensio}
\address{Departamento de Matem\'aticas, Universidad de Murcia, 30100 Murcia, Spain}
\email{paguil@um.es}

\author{Furuzan Ozbek}
\address{Department of Mathematics, University of Kentucky, 40506 Lexington, Kentucky, USA}
\email{furuzanozbek@uky.edu}

\thanks{The first two authors have been partially supported by the DGI
MTM2010-20940-C02-02 and by
the
Fundaci\'on Seneca 04555/GERM/06. }
\date{}

\subjclass[2000]{16G70, 18E10,18G15}

{\keywords{(Pre)covering ideal, phantom map, cover, $\mathcal C$-filtered module, deconstructible class, quiver representation, Grothendieck category.}

\begin{abstract}
Sufficient conditions for an ideal $\mathcal I$ in $R\Mod$ to be covering are proved. This allows to obtain an alternative proof of the existence of phantom covers of modules. Our approach is inspired by an extension of the standard deconstructibility techniques used in Approximation Theory.
\end{abstract}

\maketitle

\section{Introduction}
Ideal Approximation Theory has been recently introduced by Herzog, Fu, Guil Asensio and Torrecillas in \cite{FGHT}. This theory establishes an extension to ideals of morphisms in general exact categories of the usual theory of covers and envelopes by modules which was independently initiated by Enochs and Auslander-Smal\o\ and that has attracted a wide interest in the last years (see e.g. the monographs \cite{BR,EO, GT,Xu} for a detailed exposition). This extension of the theory has experienced an increasing development in the last year (see e.g. \cite{FGHT2,FH,FH2}).
The main reason of this growing interest is that this proposed extension covers significative examples of covers and envelopes of modules which were not included in the original theory because they need to be stated in terms of morphisms instead of objects. This is the case, for instance, of the existence of phantom covers of modules obtained in \cite{Herzog1}, or the existence of almost split morphisms in the Abelian category $A$-mod of finitely presented modules over Artin algebras (see e.g. \cite{ARS}).  

This introduction of an approximation theory for ideals of morphisms naturally leads to the statement of existence theorems for covers and envelopes associated to these ideals, similar to the ones obtained in the classical theory of covers and envelopes. 
Following this approach, it has been recently proved in \cite{Ozbek} that if $\mathcal I$ is an ideal in a module category $R\Mod$ generated by a set (see \cite{Ozbek} for a definition of this concept), 
then $\mathcal{I}$ is a precovering ideal in the sense of \cite{FGHT}. 
This result extends to this new setting a classical existence theorem of precovers and preenvelopes associated to cotorsion pairs of objects (see e.g. \cite{GT}). However, the Ideal Approximation Theory proposed in \cite{FGHT} seems to be more complicated than the classical one and therefore, the existence of phantom covers of modules proved in \cite{Herzog1}, and which represent a central example in the theory, cannot be infered from this result.

The main motivation of this paper is to introduce a new criterion for the existence of covers of modules associated to ideals of morphisms. Namely, we prove in Section 3  sufficient conditions to ensure  that an ideal in $R\Mod$ is covering, in the sense that any $R$-module has an $\mathcal I$-cover (cf. \cite{FGHT}). And these sufficient conditions are satisfied by the ideal of phantom morphisms in $R\Mod$ introduced in \cite{Herzog1}. The key observation in our paper is that one may identify ideals $\mathcal I$ of \emph{morphisms} in $R\Mod$ with certain classes $\Idealq$ of \emph{objects} in the Grothendieck category $\mathcal A_2$ of all representations by left $R$-modules of the quiver $\mathbb{A}_2:\bullet\to \bullet$. This allows to apply the general existence theorems developed in the literature for (pre)covers of modules with respect to a class of objects. Namely, we show:

\medskip\par\noindent
{\bf Theorem \ref{prin}.} Let $\mathcal I$ be an ideal in $R\Mod$ closed under direct limits. If $\mathcal I$ is the closure under direct limits of a set of morphisms $\mathcal I_0\subseteq\mathcal I$, then $\mathcal I$ is a covering ideal.
\smallskip

As we have just observed, our approach has the main  advantage of relating existence theorems for covering ideals of morphisms to the corresponding ones for covering classes of objects in the Grothendieck category $\mathcal A_2$. But  it also has an important drawback: this identification does not respect extensions in the sense that, if $f,g$ are two homomorphisms in $R\Mod$, 
then $\Ext^1_R(f,g)$ (see e.g. \cite{FGHT} for its definition) is different from the extension group of the associated representations in $\mathcal A_2$, 
$\Ext^1_{\mathcal A_2}(f,g)$. 
As a consequence, the class of objects $\Idealq$ in $\mathcal A_2$ associated to any given  ideal of morphisms $\mathcal I$ in $R\Mod$ is not closed under extensions. Therefore, some classical results, as the so-called Wakamatsu Lemmas (which have been recently extended to this new setup in \cite{FH2}), cannot be proved by means of this approach (see Remark \ref{noext} for a more detailed explanation).

We devote our last section to apply our results to  phantom ideals of morphisms, in the sense of Benson \cite{Be} (see also \cite{Gna, Herzog1}). We recall that the notion of phantom morphism has its origin in the study of morphisms between CW complexes \cite{MG}. It was later extended  by Neeman \cite{Neeman} to triangulated categories and used by Benson and Gnacadja to study the stable category associated to $k[G]\Mod$, where $G$ is a finite group and $k$ is a field whose characteristic divides the order of $G$ (see e.g. \cite{Be, Gna}). Christensen and Strickland in \cite{CS} were the first authors who realized the close connection between phantom morphisms and Purity Theory (see e.g. \cite{C}). And this relation has later shown to be essential to obtain a general proof of the existence of phantom covers of modules \cite{Herzog1,Herzog2}. We show in Section 4 that the ideal $\Phant$ of phantom morphisms in $R\Mod$  satisfies the above existence theorem. This allows us to get an alternative proof of \cite[Theorem 7]{Herzog1}:

\medskip\par\noindent
{\bf Corollary \ref{phantomcover}.} Every left $R$-module has a surjective phantom cover.
\smallskip

We also obtain  a new proof of the result obtained in \cite{Herzog1} that the kernels of these phantom covers are always pure injective modules (see also \cite{Herzog2}).

We would finally like to stress that our Theorem \ref{main} also shows that the ideal $\Phant$ admits a {\em nice} filtration by $\leq \kappa$-presented phantom morphisms, for some infinite cardinal $\kappa$. Classes of objects satisfying this property are usually called in the literature \emph{deconstructible}. Deconstructible classes of objects $\mathcal F$ which are closed under transfinite extensions play a central role in the classical approximation theory (see e.g. \cite{S,Tr}), as they provide special $\mathcal F$-precovers and special $\mathcal F^{\perp}$-preenvelopes of modules. However, Remark \ref{noext} shows that the class of objects $\Idealq$ that we are associating to any ideal $\mathcal I$ of morphisms in $R\Mod$ is never closed under the above type of filtrations, unless $\mathcal I=\Hom$. This fact suggests that a new notion of deconstrucible classes of morphisms may be needed for Ideal Approximation Theory.

\section{Preliminaries and notation.}

\subsection{Ideals in categories and phantom morphisms.}

Let $R$ be an associative ring with identity and let us denote by $R\Mod$ the category of unitary left modules over $R$. Following \cite{FGHT}, we will call an additive subbifunctor of the bifunctor $\Hom:R\Mod^ {op}\times R\Mod\to \textrm{Ab}$ an \emph{ideal} $\mathcal I$ of $R\Mod$. This means that given $R$-homomorphisms $f,g,h,t$ with $f,g\in\mathcal I$, we get that $f+g\in \mathcal I$ and $h\circ f\circ t\in \mathcal I$, whenever they are defined.

An important instance of ideal in $R\Mod$ is the ideal of phantom morphisms considered in \cite{Herzog1}. Let us recall that a morphism $f:M\to N$ of left $R$-modules is called a \emph{phantom morphism} if for each morphism $g:L\to M$, with $L$ a finitely presented left $R$-module, the composition $f\circ g$ factors through a (finitely presented) projective module. Equivalently, for each left $R$-module $A$, the functor $\Ext^1(f,A)$ maps $\Ext^1(N,A)$ inside the subgroup ${\rm PExt}^1(M,A)$ of $\Ext^1(M,A)$ consisting of all pure-exact sequences.
It is straightforward to check that the class of all phantom morphisms forms an ideal, which we will denote by $\Phant$. 

The next definition is the natural extension to ideals of morphisms of the usual notions of (pre)covers and (pre)envelopes with respect to a class of modules (see \cite{FGHT}).

\begin{defi}
Let $\mathcal I\subseteq \Hom_R$ be an ideal in $R\Mod$ and $M\in R\Mod$. An $\mathcal I$-\emph{precover} of $M$ is a morphism $i:I\to M$ in $\mathcal I$ such that any morphism $i':I'\to M$ in $\mathcal I$ factors through $i$. I.e., there exists a commutative triangle

$$\xymatrix{   I'\ar@{.>}[d]\ar[dr]^{i'}   &  \\
               I\ar[r]^{i}  & M.   }$$
An $\mathcal I$-precover $i:I\to M$ is said to be an $\mathcal I$-cover if every map $j$ that completes the diagram
$$\xymatrix{   I\ar[d]_j\ar[dr]^{i}   &  \\
               I\ar[r]^{i}  & M    }$$
is necessarily an automorphism. An ideal $\mathcal I$ is said to be \emph{(pre)covering} if every $R$-module $M$ admits an $\mathcal I$-(pre)cover. 

$\mathcal I$-\emph{(pre)envelopes} and \emph{(pre)\-en\-ve\-loping} ideals are defined dually.
\end{defi}

\medskip
\subsection{The category $\mathcal A_2$}

Let us denote by $\mathbb{A}_2$ the quiver with two vertices $v_1,v_2$ and an edge $a:v_1\to v_2$. This may be thought as a small category.
Let us consider the category $\mathcal A_2=(\mathbb{A}_2,R\Mod)$ of {\it all representations}  of the quiver $\mathbb{A}_2$ by left $R$-modules. 
That is, the category of all covariant functors from $\mathbb A_2$ to $R\Mod$. Note that an object $\overline{M}$ of $\mathcal A_2$ is just a morphism $\overline{M}\equiv M_1\stackrel{f}{\to}M_2$ in $R\Mod$. Whereas a morphism in $\mathcal A_2$ from $\ov{M}\equiv M_1\stackrel{f}{\to}M_2$ to $\ov{N}\equiv N_1\stackrel{g}{\to}N_2$ is a natural transformation; that is, a pair of morphisms $(d,s)$ in $R\Mod$ for which the diagram
$$\xymatrix{M_1\ar[r]^f\ar[d]_{d} & M_2 \ar[d]^{s} \\ N_1\ar[r]_g & N_2}$$ is commutative. The category $\mathcal A_2$ is Grothendieck and the representations $ R\stackrel{1_R}{\to}R$ and $ 0\stackrel{0}{\to}R$ are projective representations that generate $\mathcal A_2$. It is also known that $\ov{P}\equiv P_1\stackrel{f}{\to}P_2$ is a projective representation if, and only if, $P_1$ and $P_2$ are projective $R$-modules and $f$ is a splitting monomorphism. And $\ov{P}$ is flat provided that $P_1$ and $P_2$ are flat $R$-modules and $f$ is a pure monomorphism (see \cite{EEsProj,EnLoyBlas}).

We can associate to any ideal $\mathcal I$ in $R\Mod$, the class of objects $\Idealq$ in $\mathcal{A}_2$  
consisting of those  representations $\overline{M}\equiv M_1\stackrel{f}{\to}M_2$ in $\Idealq$ such that  $f\in\mathcal I$.
In particular, we will denote by $\Phantq$ the class of all phantom morphisms in $R\Mod$, considered as a class of representations in $\mathcal A_2$. Hence, $\overline{M}\equiv M_1\stackrel{f}{\to}M_2$ belongs to $\Phantq$ if and only if $f$ is a phantom morphism in $R\Mod$. It is clear, from the definition of phantom morphism, that flat representations of $\mathbb A_2$ belong to  $\Phantq$. This means that, if $\ov{F}\equiv F_1\to F_2$ is a representation in which $F_1$ is a flat $R$-module, then $\ov{F}\in \Phantq$. In particular, $\Phantq$ contains a projective generator of $\mathcal A_2$.

As $\mathcal A_2$ is a Grothendieck category, we have the usual notions of \emph{(pre)covers} and \emph{(pre)envelopes} with respect to a class of representations $\mathcal F$ in $\mathcal A_2$. Namely,
if $\mathcal F$ is a class of objects in $\mathcal A_2$ and $\ov{M}\in \mathcal A_2$, an $\mathcal F$-precover ($\mathcal F$-preenvelope) of $\ov{M}$ is
a morphism $\ov{F}\stackrel{\varphi}{\rightarrow} \ov{M}$
(resp., $\ov{M}\stackrel{\varphi}{\rightarrow} \ov{F}$) with $\ov{F}\in\mathcal F$, such that
$\Hom (\ov{F'},\ov{F})\rightarrow \Hom(\ov{F'}, \ov{M})\rightarrow 0$ is exact  (resp., $\Hom
(\ov{M},\ov{F'})\rightarrow  \Hom(\ov{F},\ov{F'})\rightarrow 0$ is  exact), for every
$\ov{F'}\in\mathcal F$. If, moreover, any $f:\ov{F}\rightarrow \ov{F}$ such that
$\varphi\circ f=\varphi$ (resp. $f\circ \varphi=\varphi$) is an
automorphism, then $\varphi$ is called an $\mathcal F$-cover
(resp., $\mathcal F$-envelope). Note that $\mathcal F$-covers (resp.
$\mathcal F$-envelopes) are unique up to isomorphism whenever they do exist. So we will talk of {\em the $\mathcal F$-cover} (resp., {\em the $\mathcal F$-envelope} ) of a representation $\ov{M}$ with
the understanding that this uniqueness is up to isomorphisms. We will say that a class $\mathcal F$ of representations in $\mathcal A_2$ is (pre)covering ((pre)enveloping)) if every $\ov{M}\in \mathcal A_2$ admits an $\mathcal F$-(pre)cover (resp., $\mathcal F$-(pre)envelope).

Throughout this paper, all rings will be associative rings with identity
element and all modules will be unitary left modules. We will denote by $R\Mod$ the category of all left $R$-modules. We refer to \cite{FGHT, EO, GT, Xu} for any undefined notion used along this text.

\section{A Sufficient Condition for Covering Ideals}

Let us begin this section by introducing the following definition which will be needed to state the main result of this section.

\begin{defi} Let $\mathcal I$ be an ideal in $R\Mod$. We will say that $\mathcal I$ is {\em closed under direct limits} if for any morphism $\{f_i:M_i\rightarrow N_i\}_I$ between directed systems of morphisms $\{g_{ij}:M_i\rightarrow M_j\}_{i\leq j}$ and $\{h_{ij}:N_i\rightarrow N_j\}_{i\leq j}$, satisfying that $f_i\in\mathcal I$ for every $i\in \mathcal I$, the induced morphism $\underrightarrow{\rm lim}\,f:\underrightarrow{\rm lim}M_i\rightarrow \underrightarrow{\rm lim}N_i$ also belongs to $\mathcal I$. 

And we will say that the ideal $\mathcal I$ is the {\em closure under direct limits} of a set of morphisms $\mathcal I_0\subseteq\mathcal I$ if there exists a set $\mathcal I_0\subseteq \mathcal I$ such that any morphism $f\in\mathcal I$ can be obtained in the above fashion from a morphism $\{f_i\}_I$ of direct systems with each $f_i\in \mathcal I_0$. 
\end{defi}
  
We can now prove our promised criterion for the existence of covering ideals of morphisms.

\begin{teor}\label{prin}
Let $\mathcal I$ be an ideal in $R\Mod$ closed under direct limits. If $\mathcal I$ is the closure under direct limits of a set of morphisms $\mathcal I_0\subseteq\mathcal I$, then $\mathcal I$ is a covering ideal of $R\Mod$.
\end{teor}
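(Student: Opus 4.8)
The plan is to transfer the problem from the ideal $\mathcal I$ of morphisms in $R\Mod$ to the associated class of objects $\Idealq$ in the Grothendieck category $\mathcal A_2$, and then invoke the classical existence theory for covers by a class of objects. First I would check the dictionary between the two settings: a morphism $i\colon I\to M$ in $\mathcal I$ corresponds to the representation $\ov{I}\equiv I\stackrel{i}{\to}M$, and the factorization property defining an $\mathcal I$-precover of $M$ should be matched against an $\Idealq$-precover of a suitable object of $\mathcal A_2$. The natural candidate is that $\mathcal I$-precovers of $M$ correspond to $\Idealq$-precovers of the object $\ov{M}\equiv 0\to M$ (or, more precisely, precovers built from representations whose second component maps into $M$); one has to verify that a morphism of representations $(d,s)\colon(I\stackrel{i}{\to}M)\to(0\to M)$ is just the data of a morphism $I\to M$ living in $\mathcal I$ with the right commutativity, and that composition/factorization in $\mathcal A_2$ reproduces exactly the triangle in the definition of $\mathcal I$-precover. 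The same comparison for the uniqueness-up-to-automorphism clause gives that $\Idealq$-covers of $\ov{M}$ yield $\mathcal I$-covers of $M$.

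Next I would establish that $\Idealq$ is a covering class of objects in $\mathcal A_2$. The hypotheses are tailored to this: since $\mathcal I$ is closed under direct limits, $\Idealq$ is closed under direct limits in $\mathcal A_2$ (direct limits in $\mathcal A_2$ are computed componentwise, and the induced map on limits stays in $\mathcal I$ by assumption); and since $\mathcal I$ is the closure under direct limits of the set $\mathcal I_0$, every object of $\Idealq$ is a direct limit of a directed system of objects coming from the set $\mathcal I_0$, viewed inside $\mathcal A_2$. Thus $\Idealq$ is a class of objects in a Grothendieck category which is closed under direct limits and is the "direct-limit closure" of a set. I would then appeal to the well-known theorem (in the vein of the Enochs/El Bashir-type results, and their versions for Grothendieck categories) that a class of objects in a Grothendieck category which is closed under direct limits and generated in this sense by a set is automatically a covering class — this is exactly where "closed under direct limits $+$ set-generated $\Rightarrow$ covering" is invoked. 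One should confirm the precise statement being cited handles the categorical (not just module) setting and that the "set of morphisms $\mathcal I_0$" really does produce a set of objects in $\mathcal A_2$, so that the generation hypothesis is literally met.

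The main obstacle, and the step deserving the most care, is the first one: making the identification between the factorization/automorphism conditions in $R\Mod$ (for $\mathcal I$) and those in $\mathcal A_2$ (for $\Idealq$) genuinely tight, rather than merely suggestive. In particular one must be careful about which object of $\mathcal A_2$ the cover of $M$ should cover, and about the fact (flagged in the introduction, Remark \ref{noext}) that this identification does \emph{not} respect extensions — so one cannot slip into using any cotorsion-pair machinery; only the bare "closed under direct limits $+$ set" criterion is available. I would therefore state a clean lemma asserting the bijective correspondence between $\mathcal I$-(pre)covers of $M$ and $\Idealq$-(pre)covers of the appropriate representation, prove it by a direct diagram chase, and then assemble the theorem: given $M$, form the corresponding object in $\mathcal A_2$, take its $\Idealq$-cover (which exists by the cited covering-class theorem applied to the class $\Idealq$ verified to be closed under direct limits and set-generated), and read it back as an $\mathcal I$-cover of $M$. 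A final check is that the $\mathcal I$-cover obtained is honest — e.g. that the codomain issues and the module $I$ are as required — but this is routine once the correspondence lemma is in place.
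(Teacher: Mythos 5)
Your overall strategy --- pass from the ideal $\mathcal I$ to the class of objects $\Idealq$ in $\mathcal A_2$, check that this class is closed under direct limits and is the direct-limit closure of a set, and invoke El Bashir's theorem that such a class of objects in a Grothendieck category is covering --- is exactly the route the paper takes, and that part of your plan is sound. The genuine gap is in the transfer step, which you correctly single out as the delicate point but then resolve with the wrong object. Your candidate $0\to M$ does not work: a morphism in $\mathcal A_2$ from $\ov{I}\equiv I_1\stackrel{i}{\to}I_2$ to $0\to M$ is a pair $(0,s)$ with $s\colon I_2\to M$ and $s\circ i=0$, which neither produces nor receives a morphism of $\mathcal I$ with codomain $M$; worse, every representation $0\to X$ lies in $\Idealq$ (the zero map belongs to any ideal), so an $\Idealq$-precover of $0\to M$ would have to factor \emph{every} homomorphism $X\to M$ through its second component --- a condition unrelated to $\mathcal I$. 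The object that makes the dictionary work is $\ov{1}_M\equiv M\stackrel{1_M}{\to}M$: a morphism $(d,s)\colon (F_1\stackrel{\varphi}{\to}F_2)\to \ov{1}_M$ forces $d=s\circ\varphi$, which lies in $\mathcal I$ automatically because $\varphi$ does and $\mathcal I$ is an ideal, and conversely every $\psi\colon G\to M$ in $\mathcal I$ gives $(\psi,1_M)\colon (G\stackrel{\psi}{\to}M)\to\ov{1}_M$, whose factorization through the $\Idealq$-precover exhibits $\psi$ as factoring through the first component of the precover. That component is then an $\mathcal I$-precover of $M$.

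A second, smaller gap: the uniqueness-up-to-automorphism clause does not transfer the way you assert. If $(s,t)\colon\ov{F}\to\ov{1}_M$ is an $\Idealq$-cover, the minimality you obtain concerns pairs of endomorphisms $(j_1,j_2)$ of $\ov{F}$ commuting with $\varphi$; a single endomorphism $j$ of $F_1$ satisfying $(t\circ\varphi)\circ j=t\circ\varphi$ need not extend to such a pair, so one cannot read off directly that $t\circ\varphi$ is an $\mathcal I$-cover. The paper only extracts from the $\mathcal A_2$ side that $\mathcal I$ is \emph{precovering}, and then upgrades precovers to covers by adapting to ideals the classical argument that a precovering class closed under direct limits is covering (\cite[Theorem 2.2.12]{Xu}, going back to Enochs). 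You need that second ingredient; the hypothesis that $\mathcal I$ itself is closed under direct limits is there precisely for this step, not only to feed the covering-class theorem in $\mathcal A_2$.
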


\begin{proof} Note that our hypothesis implies that $\Idealq$ is the closure under direct limits in the  Grothendieck category  $\mathcal A_2$ of the set $\mathcal I_0(\mathcal A_2)$. 
Hence $\Idealq$ is a covering class in $\mathcal A_2$ by \cite[Theorem 3.2]{Bashir}. 

Let us first show that this implies that  
$\mathcal{I}$ 
is a precovering ideal. Let $M$ be an $R$-module and 
$\ov{F}\equiv F_1 \stackrel{\varphi}{\to} F_2$, an $\Idealq$-cover of $\ov{1}_{M}\equiv M \stackrel{1_M}{\to} M$. 
Then there exist morphisms $s,t$ such that the following diagram commutes,
$$\xymatrix{F_1 \ar[r]^{s} \ar[d]_{\varphi} & M \ar@{=}[d] \\ 
F_2 \ar[r]^{t} & M} $$
Now, given any $G\stackrel{\psi}{\to}M$ in $\Idealq$, there exist morphisms $\alpha, \beta$ which make the following diagram commutative
$$\xymatrix{ & G \ar@{.>}[dl]_{\alpha} \ar[dr]^{\psi} \ar[dd]^>>>>>>{\psi}& \\
F_1 \ar[rr]^>>>>>>>>>>{s} \ar[dd]_{\varphi}& & M \ar@{=}[dd] \\
&M \ar@{.>}[dl]_{\beta} \ar@{=}[dr]& \\
F_2 \ar[rr]^{t} & & M}$$
since $\ov{F}$ is an $\Idealq$-precover. Note that this implies that $s=t \circ \varphi $ belongs to $\mathcal{I}$, since $\mathcal I$ is an ideal, and that the top triangle commutes. Hence, $s$ is an $\mathcal{I}$-precover of $M$. This shows that $\mathcal{I}$ is a precovering ideal.

Finally, \cite[Theorem 2.2.12]{Xu} (see \cite[Lemmas 2.1,2.2,2.3]{E} for the original argument) can be easily adapted to the framework of ideals of morphisms to deduce that if an ideal $\mathcal{I}$ of $R\Mod$ is closed under direct limits and $M$ has an $\mathcal{I}$-precover, then it also has an $\mathcal{I}$-cover. This finishes the proof.
\end{proof}

As noted in \cite[Theorem 3.3]{Bashir}, the above arguments can be easily carried out to deduce the following stronger result under the assumption of Vop\v enka's Principle (see e.g. \cite[Chapter 6]{AR}).

\begin{teor}
(Assume Vop\v enka's Principle) Any ideal $\mathcal I$ of $R\Mod$ closed under direct limits is covering.
\end{teor}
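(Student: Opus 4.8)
The plan is to rerun the proof of Theorem~\ref{prin} almost verbatim, the only change being that the appeal to \cite[Theorem 3.2]{Bashir} is replaced by the stronger \cite[Theorem 3.3]{Bashir}, which holds under Vop\v enka's Principle. First I would observe that direct limits in $\mathcal A_2$ are formed componentwise, so the hypothesis that $\mathcal I$ is closed under direct limits translates immediately into the statement that the associated class of objects $\Idealq$ is closed under direct limits in the Grothendieck category $\mathcal A_2$. Under Vop\v enka's Principle, \cite[Theorem 3.3]{Bashir} asserts that \emph{every} class of objects of $\mathcal A_2$ which is closed under direct limits is a covering class, with no requirement that it be generated by a set; hence $\Idealq$ is covering in $\mathcal A_2$.

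Second, I would transport this conclusion back to $R\Mod$ exactly as in the second paragraph of the proof of Theorem~\ref{prin}: taking an $\Idealq$-cover $\ov{F}\equiv F_1\stackrel{\varphi}{\to}F_2$ of the representation $\ov{1}_{M}\equiv M\stackrel{1_M}{\to}M$ produces morphisms $s\colon F_1\to M$ and $t\colon F_2\to M$ with $s=t\circ\varphi$, and the same diagram chase shows that $s$ is an $\mathcal I$-precover of $M$. Thus $\mathcal I$ is a precovering ideal. Finally, since $\mathcal I$ is closed under direct limits and every module now has an $\mathcal I$-precover, the ideal-theoretic adaptation of \cite[Theorem 2.2.12]{Xu} (equivalently \cite[Lemmas 2.1--2.3]{E}) upgrades $\mathcal I$-precovers to $\mathcal I$-covers, so $\mathcal I$ is covering.

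There is essentially no new mathematical obstacle here beyond what Theorem~\ref{prin} already overcomes; the points demanding care are purely set-theoretic bookkeeping. One must confirm that the hypotheses of \cite[Theorem 3.3]{Bashir} are genuinely met by $\mathcal A_2$ — which is fine, since $\mathcal A_2$ is a Grothendieck, hence locally presentable, category — and that it is precisely closure under direct limits (rather than some extra smallness) that Vop\v enka's Principle allows one to trade for the ``generated by a set'' assumption of Theorem~\ref{prin}. One should likewise recheck that the concluding bootstrap from precovers to covers via \cite[Theorem 2.2.12]{Xu} invokes only closure of $\mathcal I$ under direct limits, which it does.
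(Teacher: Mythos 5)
Your proposal is correct and follows exactly the route the paper intends: substitute \cite[Theorem 3.3]{Bashir} (valid under Vop\v enka's Principle, with no set-generation hypothesis) for \cite[Theorem 3.2]{Bashir}, and then rerun the transport of the $\Idealq$-cover of $M\stackrel{1_M}{\to}M$ back to an $\mathcal I$-precover of $M$ together with the direct-limit upgrade from precovers to covers. The paper itself gives no further argument beyond this observation, so there is nothing to add.
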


\begin{rem}\label{noext}
We would like to stress that the usual version of Wakamatsu's Lemma \cite[Lemma 2.1.1]{Xu} in $\mathcal A_2$ cannot be used to infer from Theorem \ref{prin} that the kernel of an $\Idealq$-cover is an object in $\Idealq^ {\perp}=\{\ov{M}\in \mathcal A_2|\ \Ext^1_{\mathcal A_2}(\ov{F},\ov{M})=0,\ \forall \ov{F}\in \Idealq\}$. The reason is that the class of representations $\Idealq$ of $\mathcal A_2$ which we have associated to any ideal  $\mathcal I$ of $R\Mod$ is not closed under extensions unless the ideal $\mathcal I=\Hom$. Note that if $f:A\rightarrow B$ is a morphism in $R\Mod$ such that $f\notin\mathcal I$ and we consider the following commutative diagram of splitting sequences, 
$$\xymatrix{
0\ar[r] & A\ar[d]^0\ar[r]^{\tau_1} & A\oplus A \ar[d]^{t_1f\pi_2}\ar[r]^{\pi_2} &A\ar[d]^0\ar[r] &0\\
0\ar[r] & B\ar[r]_{t_1} & B\oplus B \ar[r]_{p_2} &B\ar[r] &0}$$
then $0:A\rightarrow B$ belongs to $\mathcal I$ but $t_1\circ f\circ\pi_2\notin\mathcal I$ since $f\notin\mathcal I$. This means that the representation $A\oplus A\stackrel{t_1f\pi_2}{\longrightarrow} B\oplus B$ is an extension of the representation $0:A\rightarrow B$ by itself which does not belong to $\Idealq$.
 
\end{rem}

\section{Filtering Phantom Morphisms}
Throughout this last section we will focus on the case in which the considered ideal $\mathcal I$ is the class $\Phant$ of all phantom morphisms in $R\Mod$. Our aim is to prove that each phantom morphism admits a {\em nice} filtration by {\em small} phantom morphism, and deduce from it that the ideal $\Phant$ satisfies the hypothesis of Theorem \ref{prin}.

Let ${\mathcal A}$ be a
Grothendieck category and $\lambda$, an ordinal number. Recall that a linearly ordered directed system of morphisms in $\mathcal A$, $\{f_{\alpha\beta}:A_\alpha\rightarrow A_\beta|\,\alpha\leq\beta<\lambda\}$, is called continuous if $A_\gamma=\underset{\alpha\leq\beta<\gamma}{\underrightarrow{\lim}}\, f_{\alpha\beta}$ for each ordinal limit $\gamma\leq\lambda$. Note that, in particular, this means that $A_0=0$. 

A linearly ordered directed system of morphisms is called a {\em continuous directed union} if all morphisms in the system are monomorphisms.
Given a class of objects $\mathcal L$ in $\mathcal A$, an
object $A$ of $\mathcal A$ is said to be \emph{$\mathcal L$-filtered} (or a direct transfinite extension of $\mathcal L$) if  $ A =
\displaystyle \lim_{ \rightarrow } \: A_{ \alpha } $ for a
 linearly ordered continuous directed union $ \{f_{\alpha\beta}: A_{ \alpha}\rightarrow A_\beta|\alpha\leq\beta<\lambda\} $  
satisfying that, 
for each $ \alpha<\lambda$, $\mathrm{Coker}(f_{\alpha,\alpha+1})$
is isomorphic to an element of $ {\mathcal L} $.
Finally, a class of objects $\mathcal L$  of a Grothendieck category $\mathcal A$ is called \emph{deconstructible} if there exists a set $\mathcal L_0\subseteq\mathcal L$ such that each object $L\in \mathcal L$ is $\mathcal L_0$-filtered.
This notion, which was introduced by Eklof \cite{Ek}, has  shown to play a central role in the classical aproximation theory by objects (see e.g. \cite{S,Tr}).

\begin{defi}\label{decons}
We will say that the ideal $\mathcal I$ in $R\Mod$ is deconstructible when the class $\Idealq$ is deconstructible in $\mathcal A_2$.
\end{defi}

\begin{rem}
The previous notion of deconstructible class is weaker than others which appear in the literature (see for instance \cite[Definition 1.4]{S}), where it is required that {\em every} $\mathcal L_0$-filtered object belongs to the class $\mathcal L$ (see also \cite[Definition 1.5]{Tr}).
\end{rem}

Our goal in this section is to show in Theorem \ref{main} that the class of phantom morphisms is deconstructible, in the sense of the above definition. To achieve this aim, we will start with the following technical lemma.

\begin{lema}
Let $\kappa\geq|R|$ be an infinite cardinal number. Let $\ov{M}\equiv M_1\stackrel{h}{\to} M_2$ be a representation in $\mathcal A_2$ and let $X_1\subseteq M_1$,
$X_2\subseteq M_2$ be any two subsets with $|X_1|, |X_2|\leq
\kappa$. Then there exists a subrepresentation $\ov{M'}$ of $\ov{M}$ of the form $\ov{M'}\equiv M_1'\to M_2'$, such that:

\begin{itemize}
\item [i) ] $M_1'\subseteq M_1$ and $M_2'\subseteq M_2$ are pure submodules.
\item [ii) ]$X_1\subseteq M_1'$, $X_2\subseteq M_2'$.
\item [iii) ]$|M_1'|,|M_2'|\leq \kappa$.
\end{itemize}\label{subrep}
\end{lema}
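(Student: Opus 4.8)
The statement is the representation-theoretic analogue of the classical Löwenheim--Skolem type argument used to build pure submodules of prescribed size: given a module $M$ and a subset of size $\leq\kappa$, one can find a pure submodule of size $\leq\kappa$ containing it (this is the content of \cite[Lemmas 2.1,2.2,2.3]{E}, or the standard ``$\kappa$-pure'' filtration arguments). Here we must do this simultaneously for $M_1$ and $M_2$ while respecting the action of the morphism $h$, so that the chosen submodules $M_1'\subseteq M_1$, $M_2'\subseteq M_2$ actually form a subrepresentation, i.e.\ $h(M_1')\subseteq M_2'$. The plan is to build $M_1'$ and $M_2'$ together by a countable back-and-forth (zig-zag) construction that alternately enlarges each of the two modules to absorb (a) the image under $h$ of what has been put in $M_1'$ so far, and (b) enough elements to guarantee purity of each submodule in its ambient module.

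\textbf{Key steps.} First I would recall the ``purity closure'' tool: for a module $N$ and a subset $Y$ with $|Y|\leq\kappa\geq|R|$, there is a pure submodule $N'$ with $Y\subseteq N'$ and $|N'|\leq\kappa$; moreover this can be arranged functorially enough that one may iterate. Concretely, purity of $N'\subseteq N$ can be tested by solvability of finite systems of linear equations with parameters in $N'$; closing $Y$ under solutions of all such systems (using that there are only $\leq\kappa$ such systems over a set of size $\leq\kappa$, since $|R|\leq\kappa$) and taking the submodule generated yields a pure submodule of size $\leq\kappa$. Second, I would set up the recursion: put $Y_1^{(0)}=X_1$ and $Y_2^{(0)}=X_2\cup h(X_1)$. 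Having defined $Y_1^{(n)},Y_2^{(n)}$ of size $\leq\kappa$, let $Y_1^{(n+1)}$ be (the underlying set of) a pure submodule of $M_1$ of size $\leq\kappa$ containing $Y_1^{(n)}$, and let $Y_2^{(n+1)}$ be a pure submodule of $M_2$ of size $\leq\kappa$ containing $Y_2^{(n)}\cup h(Y_1^{(n+1)})$. Third, set $M_1'=\bigcup_n Y_1^{(n)}$ and $M_2'=\bigcup_n Y_2^{(n)}$. Then $|M_1'|,|M_2'|\leq\kappa$ (countable union of sets of size $\leq\kappa$), $X_1\subseteq M_1'$, $X_2\subseteq M_2'$, and $h(M_1')=h(\bigcup_n Y_1^{(n)})=\bigcup_n h(Y_1^{(n)})\subseteq\bigcup_n Y_2^{(n+1)}= M_2'$, so $\ov{M'}\equiv M_1'\xrightarrow{h|_{M_1'}} M_2'$ is a genuine subrepresentation. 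Finally, a countable increasing union of pure submodules is pure (purity is a condition about solving finite systems, each of which, once solvable in the union, is already solvable at some finite stage), so $M_1'\subseteq M_1$ and $M_2'\subseteq M_2$ are pure, giving (i).

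\textbf{Main obstacle.} The only real subtlety is bookkeeping: one must interleave the ``enlarge to absorb $h$-images'' step with the ``enlarge to become pure'' step so that both properties survive in the limit, and one must confirm that purity of each $M_i'$ is preserved under the countable union despite the union also being forced to grow for the unrelated reason of tracking $h$. This is handled by the observation above that purity of a submodule is witnessed by solvability of finite systems of equations, a property that transfers across directed unions; since $h(Y_1^{(n+1)})\subseteq Y_2^{(n+1)}$ by construction, adjoining these images never destroys the purity already achieved, it only requires one further purification step, which the recursion supplies at stage $n+2$. Everything else is a routine cardinality count using $\kappa\geq|R|$ and $\kappa$ infinite.
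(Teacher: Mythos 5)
Your argument is correct and rests on the same essential tool as the paper's proof, namely the fact (\cite[Lemma 5.3.2]{EO}) that any subset of size $\leq\kappa$ of a module over a ring with $|R|\leq\kappa$ is contained in a pure submodule of size $\leq\kappa$. However, the countable back-and-forth recursion is more machinery than the situation requires. Because the quiver $\mathbb{A}_2$ has a single arrow, the constraint linking the two coordinates is one-directional: $M_2'$ must contain $h(M_1')$, but nothing forces $M_1'$ to grow in response to an enlargement of $M_2'$. The paper therefore finishes in two steps: first take a pure submodule $M_1'\subseteq M_1$ of size $\leq\kappa$ containing $X_1$; then take a pure submodule $M_2'\subseteq M_2$ of size $\leq\kappa$ containing $X_2\cup h(M_1')$. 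In your construction the chain $Y_1^{(n)}$ in fact stabilizes after the first purification step, so the directed-union step and the (correct) observation that purity passes to directed unions are only re-deriving what a single application of the closure lemma already yields. Nothing is wrong --- a genuine zig-zag would be needed for a quiver with arrows in both directions between the two vertices --- but here the iteration collapses to the paper's two-step argument.
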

\begin{proof} Let $\kappa$, $X_1$, $X_2$ be as given in the statement of the lemma. By \cite[Lemma 5.3.2]{EO}, we conclude that there is a pure submodule $M_1'$ of $M_1$ such that $X_1 \subset M_1'$ and $|M_1'| \leq {\rm max}(|R|, |X_1|)$. Hence, $|M_1'| \leq \kappa$. Using the same argument, we can find another pure submodule $M_2'$ of $M_2$, containing $X_2$, such that $h(M_1') \subset M_2'$ and $|M_2'| \leq {\rm max}(|R|, |M_1'|)$. In particular, $|M_2'|\leq \kappa$. We get then the following commutative diagram
$$\xymatrix{ M_1' \ar@{^{(}->}[r] \ar[d]_{h|_{M_1} } & M_1 \ar[d]^{h} \\
M_2' \ar@{^{(}->}[r] & M_2 }$$
which is the desired subrepresentation of $\ov{M}$ satisfying the three requiered properties.
\end{proof}

It is well known that the Grothendieck category $\mathcal A_2$ is locally finitely presented. Therefore we can state the following definition of purity in $\mathcal A_2$, which also reflects the notion of purity contained in the above lemma.

\begin{defi}
A subrepresentation $\ov{N}\equiv N_1\to N_2$ of $\ov{M}\equiv M_1\to M_2$ is called \emph{pure} provided that $N_1$ and $N_2$ are pure submodules of $M_1$ and $M_2$, respectively.
\end{defi}

\begin{lema}\label{phantomsub} 
Let $\kappa \geq |R|$ be an infinite cardinal number, $\ov{F}\equiv F_1\to F_2$, a representation in $\Phantq$, and $X_1\subseteq F_1,X_2\subseteq F_2$ two subsets with $|X_1|,|X_2|\leq \kappa$. Then there is a pure phantom subrepresentation $\ov{S}\equiv  S_1 \to S_2$ of $\ov{F}$ such that $|S_1|, |S_2| \leq \kappa$ and $X_1\subseteq S_1$ and $X_2\subseteq S_2$.
\end{lema}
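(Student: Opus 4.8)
The plan is to build the pure phantom subrepresentation $\ov{S}$ by an iterated use of Lemma \ref{subrep}, a standard "Löwenheim–Skolem" type closure argument, where at each stage we enlarge the chosen subsets to absorb the data needed to witness the phantom property of the restricted map. First I would recall what needs to be verified: we must produce pure submodules $S_1\subseteq F_1$ and $S_2\subseteq F_2$, each of cardinality $\leq\kappa$, containing $X_1$ and $X_2$ respectively, such that $f(S_1)\subseteq S_2$ (so $\ov S\equiv S_1\stackrel{f|}{\to}S_2$ is a genuine subrepresentation) and such that the restriction $f|_{S_1}:S_1\to S_2$ is again a phantom morphism. The first three conditions are handed to us almost directly by Lemma \ref{subrep}; the real content is the last one, that phantomness descends to the subrepresentation.

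The key observation for the phantom condition is the characterization of phantom morphisms in terms of finitely presented modules: $f:F_1\to F_2$ is phantom iff for every finitely presented $L$ and every $g:L\to F_1$, the composite $fg$ factors through a projective. Since $|R|\leq\kappa$, there are at most $\kappa$ isomorphism classes of finitely presented $R$-modules, and for each such $L$ the set $\Hom_R(L,S_1)$ has cardinality $\leq\kappa$ once $|S_1|\leq\kappa$. So the plan is a transfinite (countable) recursion: start with $S_1^{(0)}, S_2^{(0)}$ obtained from Lemma \ref{subrep} applied to $X_1,X_2$. At stage $n$, for each finitely presented $L$ and each $g:L\to S_1^{(n)}$, use the phantomness of $f$ to factor $fg = f_L^{(g)}\circ e_L^{(g)}$ through some projective $P_L^{(g)}$; since $L$ is finitely presented we may take $P_L^{(g)}$ finitely generated, hence of cardinality $\leq\kappa$, and $e_L^{(g)}(L)$ generates a small submodule $Q$ of $P_L^{(g)}$; then throw the (countable, or $\leq\kappa$-sized) set $f_L^{(g)}(Q)$ into the subsets to be captured at the next stage, i.e. enlarge $X_2$ accordingly, and apply Lemma \ref{subrep} again to get $S_1^{(n+1)}\supseteq S_1^{(n)}$, $S_2^{(n+1)}\supseteq S_2^{(n)}$, still of size $\leq\kappa$ and still pure. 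Finally set $S_1=\bigcup_n S_1^{(n)}$, $S_2=\bigcup_n S_2^{(n)}$.

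Then I would check the four requirements for this $\ov S$. Cardinality: a countable union of sets of size $\leq\kappa$ has size $\leq\kappa$. Purity: an increasing union of pure submodules is pure (this is standard, and also consistent with the purity notion defined just before the lemma applied componentwise). Containment of $X_i$: immediate from stage $0$. That $\ov S$ is a subrepresentation: $f(S_1)\subseteq S_2$ holds since it holds at each finite stage by construction in Lemma \ref{subrep} (the diagram there gives $h(M_1')\subseteq M_2'$). The phantom condition for $f|_{S_1}$: given finitely presented $L$ and $g:L\to S_1$, since $L$ is finitely presented its image lands in some $S_1^{(n)}$, so $g$ factors through $S_1^{(n)}$; by construction the factorization $fg = f_L^{(g)}e_L^{(g)}$ through the projective $P_L^{(g)}$ was arranged so that $f_L^{(g)}$ restricted to the relevant small submodule $Q$ of $P_L^{(g)}$ has image inside $S_2^{(n+1)}\subseteq S_2$; hence $f|_{S_1}\circ g$ factors through $P_L^{(g)}$ (or through $Q$), which is projective (or a summand thereof — one should be a little careful and either keep all of $P_L^{(g)}$, which is small, or note that $Q$ can be taken to be a projective summand), establishing that $f|_{S_1}$ is phantom.

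The main obstacle, and the step deserving genuine care in the write-up, is the bookkeeping in the recursion that makes the factorizations through projectives "come back down" into $S_2$: one has to ensure that the projective modules $P_L^{(g)}$ through which $fg$ factors can be chosen small (size $\leq\kappa$), which uses that $L$ is finitely presented so that $fg$ factors through a finitely generated projective, and that there are only $\leq\kappa$ relevant triples $(L,g)$ at each stage so that the total amount of new data to absorb stays $\leq\kappa$. A secondary subtlety is whether to carry the whole projective $P_L^{(g)}$ or just a finitely generated submodule into $S_2$; since projectivity is needed in the end, the cleanest route is to observe $fg$ factors through a \emph{finitely generated projective} $P$ (cardinality $\leq\kappa$) and simply put $f_L^{(g)}(P)$ into the data to be absorbed, so that in the limit $f|_{S_1}\circ g$ literally factors as $S_1\to P\to S_2$ with $P$ projective. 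Everything else is routine closure-under-unions verification.
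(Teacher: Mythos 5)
Your proposal is correct and uses essentially the same mechanism as the paper's proof: first apply Lemma \ref{subrep} to get a small pure subrepresentation, then enlarge the target to absorb the images $f_h(P)$ of the projective factorizations of $\varphi\circ h$ for all $h:L\to S_1$ with $L$ finitely presented, using that there are at most $\kappa$ such pairs $(L,h)$ and that each $P$ may be taken finitely presented. The only difference is that you run a countable iteration, whereas the paper does a single enlargement step: since the phantom condition for $\varphi|_{S_1}$ only quantifies over maps $L\to S_1$ and $S_1$ is fixed by the first application of Lemma \ref{subrep} (only the codomain needs to grow, from $T$ to $\tilde T$ and then to a pure $S_2\supseteq\tilde T$), no new test maps appear and the recursion closes off immediately. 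Your extra iteration is harmless and your limit verifications (directed unions of pure submodules are pure, finitely presented $L$ maps into some finite stage) are sound, so both arguments are valid; the one-step version is just slightly leaner.
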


\begin{proof} By Lemma~\ref{subrep}, we may find a pure subrepresentation of $\ov{F}\equiv  F_1 \to F_2$ which is small as desired
$$\xymatrix{  S_1 \ar@{^{(}->}[r] \ar[d]_{\varphi\mid_{S_1}} & F_1 \ar[d]^{\varphi} \\
T \ar@{^{(}->}[r] & F_2 }.$$

Now, we must transform this subrepresentation into a phantom one. Let $L$ be any finitely presented $R$-module and $h: L \to S_1$, any homomorphism. As $\ov{F}\in \Phantq$, there exists a finitely presented projective $R$-module $P_h$ (whose cardinality is therefore, bounded by $\kappa$) such that $L \to F_2$ factors through it
$$\xymatrix{L \ar[d]_{h} \ar@/^1pc/[ddrr]& & \\
S_1 \ar@{^{(}->}[r] & F_1  \ar[dd]^{\varphi} & \\
& & P_h \ar[dl]^{f_{h}} \\
T \ar@{^{(}->}[r] & F_2 & } $$
Define 
$$\tilde{T}= T + \sum\{{\rm Im}(f_{h})|\, L\text{ is a finitely presented module and }h\in{\rm Hom}(L,S_1)\}$$ 
and note that $|\tilde{T}| \leq \kappa$. $\tilde{T}$ is not necessarily a pure submodule of $F_2$ but, by Lemma~\ref{subrep}, we may find a pure submodule $\tilde{T} \subset S_2 \subset F_2$ such that $|S_2|\leq \kappa$. Then, by construction, for any arbitrary finitely presented $R$-module $L$ and any $h:L \to S_1$, there exists a projective $R$-module $P$ such that the following diagram commutes
$$\xymatrix{L \ar[d]_{h} \ar@/^1pc/[ddrr]& & \\
S_1 \ar[dd]_{\varphi\mid_{S_1}} \ar@{^{(}->}[r] & F_1  \ar[dd]^{\varphi} & \\
& & P \ar[dll]_{f_{h}} \\
S_2 \ar@{^{(}->}[r] & F_2 & } $$
since $\Im(f_h )\subset S_2$. This means that $L \to S_2$ factors through $P$. And hence, $\xymatrix{S_1 \ar[r]^{\varphi\mid_{S_1}} & S_2}$ belongs to $\Phantq$. 
\end{proof}

\begin{lema}\label{clos}
The class $\Phantq$ is closed under direct limits.
\end{lema}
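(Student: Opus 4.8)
The plan is to show that a direct limit of phantom morphisms, computed in $\mathcal{A}_2$, is again phantom, i.e. that $\Phantq$ is closed under direct limits. Recall that a morphism $h\colon M_1\to M_2$ is phantom precisely when, for every finitely presented $R$-module $L$ and every $g\colon L\to M_1$, the composite $h\circ g$ factors through a (finitely presented) projective module. So I would start with a directed system $\{\ov{F}^{(i)}\equiv F_1^{(i)}\stackrel{\varphi_i}{\to}F_2^{(i)}\}_{i\in I}$ of representations in $\Phantq$ with connecting morphisms $(d_{ij},s_{ij})$, let $\ov{F}\equiv F_1\stackrel{\varphi}{\to}F_2$ be its direct limit in $\mathcal{A}_2$, and recall that colimits in $\mathcal{A}_2$ are computed componentwise, so $F_1=\varinjlim F_1^{(i)}$, $F_2=\varinjlim F_2^{(i)}$, and $\varphi=\varinjlim\varphi_i$. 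Denote the canonical morphisms by $u_i\colon F_1^{(i)}\to F_1$ and $w_i\colon F_2^{(i)}\to F_2$.

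The key step is the standard fact that a finitely presented module is a \emph{small object}: given any $g\colon L\to F_1=\varinjlim F_1^{(i)}$ with $L$ finitely presented, $g$ factors as $g=u_i\circ g_i$ for some index $i$ and some $g_i\colon L\to F_1^{(i)}$. Since $\ov{F}^{(i)}\in\Phantq$, the composite $\varphi_i\circ g_i\colon L\to F_2^{(i)}$ factors through a finitely presented projective module $P$, say $\varphi_i\circ g_i=q\circ p$ with $p\colon L\to P$ and $q\colon P\to F_2^{(i)}$. Then $\varphi\circ g=\varphi\circ u_i\circ g_i=w_i\circ\varphi_i\circ g_i=w_i\circ q\circ p=(w_i\circ q)\circ p$, which exhibits $\varphi\circ g$ as factoring through the projective module $P$. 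Since $L$ and $g$ were arbitrary, $\varphi$ is a phantom morphism, so $\ov{F}\in\Phantq$, as required.

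There is essentially no serious obstacle here: the only point that needs a word of justification is that finitely presented $R$-modules are small objects in $R\Mod$ with respect to arbitrary direct limits (not just $\lambda$-directed ones for large $\lambda$), which is classical, together with the elementary observation that colimits in the functor category $\mathcal{A}_2=(\mathbb{A}_2,R\Mod)$ are formed pointwise and hence that $\varphi$ really is the direct limit morphism $\varinjlim\varphi_i$. One mild bookkeeping issue is that after factoring $g$ through some $g_i$ one should, if the compatibility of the factorization is needed downstream, pass to a later index; but since we only need the existence of a single factorization through a projective, no such refinement is required. I would therefore present the argument as above, with the smallness of finitely presented modules and the pointwise computation of colimits in $\mathcal{A}_2$ cited as the two standing facts.
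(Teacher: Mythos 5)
Your proof is correct and is essentially the same as the paper's: both reduce to the fact that $\Hom(L,-)$ commutes with direct limits for $L$ finitely presented, factor the given map $L\to\varinjlim F_1^{(i)}$ through some stage $F_1^{(i)}$, and then push the projective factorization of $\varphi_i\circ g_i$ forward along the structural map to the colimit. No substantive difference.
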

\begin{proof} Let $\{f_i:N_i\rightarrow M_i\}_I$ be a morphism between two directed systems of morphisms $\{g_{ij}:N_i\rightarrow N_j\}_{i\leq j}$ and $\{h_{ij}:M_i\rightarrow M_j\}_{i\leq j}$ in $R\Mod$ such that $f_i:N_i\rightarrow M_i$ is a phantom morphism, for each $i\in I$. 
And let $\underrightarrow{\lim}f_i:\underrightarrow{\lim}N_i\rightarrow\underrightarrow{\lim}M_i$ be the induced morphism in the direct limits. We must show that $\underrightarrow{\lim}f_i$ is also a phantom morphism.

Let $L$ be any finitely presented left $R$-module. We have to prove that for any morphism $L \to \displaystyle \lim_{ \rightarrow } \: N_i$ the composition $L \to \displaystyle \lim_{ \rightarrow } \: N_i \to \displaystyle \lim_{ \rightarrow } \: M_i$ factors through a projective $R$-module. But, as $L$ is finitely presented, we have an  isomorphism
$$\Hom(L,\lim_{ \rightarrow } \: N_i) \cong \lim_{ \rightarrow } \Hom(L, N_i).$$

Hence, given a morphism $L \to \displaystyle \lim_{ \rightarrow } \: N_i$, there is a $j \in I$ such that the following diagram can be completed
$$\xymatrix{ & L \ar@{.>}^{\circlearrowleft}[dl] \ar[d] \\
N_j \ar[r] \ar[d] & \displaystyle\lim_{ \rightarrow } \: N_i \ar[d] \\
M_j \ar[r] &  \displaystyle\lim_{ \rightarrow } \: M_i }$$

Now, as $f_j\in\Phant$, we conclude that the composition $L \to N_j \to M_j$ factors through a projective $R$-module, say $P$
$$\xymatrix{ & & L \ar[dll] \ar[d] \\
N_j \ar[rr] \ar[dd] \ar[dr]& & \displaystyle\lim_{ \rightarrow } \: N_i \ar[dd] \\
& P \ar[dl]& \\
M_j \ar[rr]& & \displaystyle\lim_{ \rightarrow } \: M_i} $$
and then, by diagram chasing, we conclude that the morphism $L \to \displaystyle \lim_{ \rightarrow } \: M_i$ factors through $P$ as well. \end{proof}

\begin{defi} Let $\ov{M}\equiv M_1\to M_2$ be a representation in $\mathcal A_2$. We define the {\em cardinality of} $\ov{M}$, $|\ov{M}|$, as the cardinality of the disjoint union $M_1\sqcup M_2$.
\end{defi}

We can now state:
 
\begin{teor}\label{main}
The class of phantom morphisms  is deconstructible.
\end{teor}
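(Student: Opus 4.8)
The plan is to fix an infinite cardinal $\kappa\geq|R|$ and let $\mathcal L_0$ be a set of representatives (up to isomorphism) of all phantom morphisms $\ov{S}\equiv S_1\to S_2$ in $\Phantq$ with $|\ov{S}|\leq\kappa$; this is genuinely a set since there are only a set's worth of $R$-modules of cardinality $\leq\kappa$ and a set's worth of maps between them. I must show that every $\ov{F}\equiv F_1\stackrel{\varphi}{\to}F_2$ in $\Phantq$ is $\mathcal L_0$-filtered, in the sense of the definition preceding Definition~\ref{decons}: a continuous directed union $\{\ov{F}_\alpha\mid\alpha\leq\beta<\lambda\}$ of subrepresentations with $\ov{F}_0=0$, union equal to $\ov{F}$, and each cokernel $\ov{F}_{\alpha+1}/\ov{F}_\alpha$ isomorphic to a member of $\mathcal L_0$.

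The construction is the standard transfinite union-of-pure-subobjects argument, now run inside $\mathcal A_2$ and keeping track of the phantom condition by means of Lemma~\ref{phantomsub}. First I would well-order the elements of $F_1\sqcup F_2$. Then, by transfinite recursion, I build an increasing continuous chain of \emph{pure} subrepresentations $\ov{F}_\alpha\equiv F_{1,\alpha}\to F_{2,\alpha}$ of $\ov{F}$, each of cardinality $\leq\kappa$, each itself belonging to $\Phantq$, and so that at stage $\alpha+1$ the next element of the well-ordering not yet absorbed is put into $\ov{F}_{\alpha+1}$. The successor step is exactly Lemma~\ref{phantomsub}: given $\ov{F}_\alpha$ already pure and phantom, apply the lemma to the finite set of ``new'' generators together with a set of $\leq\kappa$ generators of $F_{1,\alpha}$ and $F_{2,\alpha}$, obtaining a pure phantom subrepresentation $\ov{F}_{\alpha+1}$ of size $\leq\kappa$ containing $\ov{F}_\alpha$. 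At limit stages $\gamma$ set $\ov{F}_\gamma=\bigcup_{\alpha<\gamma}\ov{F}_\alpha$; this is a pure subrepresentation (a directed union of pure submodules is pure in each coordinate), has cardinality $\leq\kappa$ when $\gamma<\kappa^{+}\cdot(\text{something})$—more precisely one simply does not bound the limit terms, only the successor increments—and lies in $\Phantq$ by Lemma~\ref{clos}, since $\ov{F}_\gamma=\underrightarrow{\lim}_{\alpha<\gamma}\ov{F}_\alpha$ is a direct limit of phantom morphisms. By construction $\bigcup_\alpha\ov{F}_\alpha=\ov{F}$.

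It remains to identify the cokernels. Because each inclusion $\ov{F}_\alpha\hookrightarrow\ov{F}_{\alpha+1}$ is a pure monomorphism in $\mathcal A_2$, i.e. $F_{1,\alpha}\subseteq F_{1,\alpha+1}$ and $F_{2,\alpha}\subseteq F_{2,\alpha+1}$ are pure submodules, the cokernel $\ov{F}_{\alpha+1}/\ov{F}_\alpha$ is the representation $F_{1,\alpha+1}/F_{1,\alpha}\to F_{2,\alpha+1}/F_{2,\alpha}$ induced by $\varphi$, and it has cardinality $\leq\kappa$. I claim this quotient is again a phantom morphism: the quotient of a phantom map by a pure subrepresentation is phantom, because if $L$ is finitely presented and $L\to F_{1,\alpha+1}/F_{1,\alpha}$ is given, purity of $F_{1,\alpha}$ lets one lift it (after replacing $L$ by a finitely presented object, using that $L$ is finitely presented and the sequence is pure) to $L\to F_{1,\alpha+1}$, factor $L\to F_{2,\alpha+1}$ through a projective $P$ using that $\ov{F}_{\alpha+1}\in\Phantq$, and then compose with $F_{2,\alpha+1}\to F_{2,\alpha+1}/F_{2,\alpha}$; the resulting factorization through $P$ differs from the original map $L\to F_{2,\alpha+1}/F_{2,\alpha}$ by something that dies, so the quotient map is phantom. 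Hence $\ov{F}_{\alpha+1}/\ov{F}_\alpha$ is a phantom representation of cardinality $\leq\kappa$, so it is isomorphic to a member of $\mathcal L_0$. This exhibits $\ov{F}$ as $\mathcal L_0$-filtered, and therefore $\Phantq$ is deconstructible, which by Definition~\ref{decons} is precisely the assertion that the class of phantom morphisms is deconstructible.

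\textbf{Main obstacle.} The delicate point is the last one: verifying that a quotient of a phantom morphism by a \emph{pure} subrepresentation is still phantom — one must handle the lifting of a map out of a finitely presented module across a pure (not split) epimorphism in the domain coordinate, and check the diagram chase in the codomain coordinate produces a genuine factorization through a projective. A cleaner route, if this direct chase is awkward, is to instead demand in the recursion that each $\ov{F}_{\alpha+1}$ be chosen so that $\ov{F}_{\alpha+1}/\ov{F}_\alpha$ itself is directly produced as a phantom representation (applying Lemma~\ref{phantomsub} relative to $\ov{F}_\alpha$ and tracking the projective factor objects $P_h$ modulo $F_{2,\alpha}$), which sidesteps the abstract quotient lemma; either way the combinatorics of simultaneously controlling purity in two coordinates, smallness, and the phantom property at successor stages is where the real work lies, and Lemmas~\ref{subrep}, \ref{phantomsub} and \ref{clos} are exactly what make it go through.
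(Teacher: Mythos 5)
Your overall strategy --- a transfinite chain of pure subrepresentations with Lemmas \ref{subrep}, \ref{phantomsub} and \ref{clos} doing the work --- is the right one, but the successor step as you first describe it does not cohere. You ask that every term $\ov{F}_\alpha$ of the chain have cardinality $\leq\kappa$ and that $\ov{F}_{\alpha+1}$ be obtained by applying Lemma~\ref{phantomsub} to a set of $\leq\kappa$ generators of $\ov{F}_\alpha$ together with the next new element. If $|\ov{F}|>\kappa$ the chain must run for more than $\kappa^{+}$ steps to exhaust $\ov{F}$, so at some limit stage $|\ov{F}_\gamma|>\kappa$; from then on no subrepresentation of cardinality $\leq\kappa$ can contain $\ov{F}_\gamma$, and Lemma~\ref{phantomsub} cannot produce the required $\ov{F}_{\gamma+1}\supseteq\ov{F}_\gamma$. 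Your parenthetical ``one simply does not bound the limit terms'' concedes this but does not repair the successor step: once a term is large, it can no longer be absorbed into a small one. The only object that can (and need) be kept of size $\leq\kappa$ is the successor \emph{quotient}, not the term itself.

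The repair is exactly your own ``cleaner route'', and it is what the paper does: at a successor stage one first proves that the quotient representation $\ov{F}/\ov{F_\mu}$ is again phantom --- this is the claim you flag as delicate, but it is a clean chase: purity of $M_\mu$ in $M$ lets a map from a finitely presented $L$ into $M/M_\mu$ lift to $M$; the composite $L\to M\to N$ factors through a projective $P$ because $\ov{F}$ is phantom; and composing with $N\to N/N_\mu$ gives \emph{literally} the map $L\to M/M_\mu\to N/N_\mu$ one started with, with no error term to kill. One then applies Lemma~\ref{phantomsub} \emph{to the quotient} $\ov{F}/\ov{F_\mu}$ to extract a pure phantom subrepresentation $\ov{F'}/\ov{F_\mu}$ of cardinality $\leq\kappa$ containing a new element; $\ov{F}_{\mu+1}=\ov{F'}$ is pure in $\ov{F}$ by transitivity of purity, and its cokernel over $\ov{F_\mu}$ is by construction a phantom representation of size $\leq\kappa$, hence isomorphic to a member of your set $\mathcal L_0$. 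Note that with this arrangement the intermediate terms $\ov{F}_\alpha$ need not themselves be phantom, and Lemma~\ref{clos} plays no role inside the filtration argument (it is needed separately, for Theorem~\ref{prin}); only the successor cokernels matter for $\mathcal L_0$-filteredness. In short: right ingredients and your fallback is the paper's proof, but the bookkeeping of your primary construction fails for representations of cardinality greater than $\kappa$.
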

\begin{proof} Let $\ov{F}\equiv M \stackrel{f}{\to} N$ be an arbitrary representation in $\Phantq$ and let us fix an infinite cardinal number $\kappa \geq |R|$. 
We are going to define, for each ordinal $\beta$, pure subrepresentations $\ov{F}_\beta\equiv M_\beta\stackrel{f_\beta}{\to}N_\beta$ of 
$\ov{F}$, and pure embeddings $g_{\alpha\beta}:M_\alpha\to M_\beta$ and 
$h_{\alpha\beta}:N_\alpha\to N_\beta$, for each ordinal $\alpha\leq\beta$, in such a way that they satisfy: 

\begin{enumerate}
\item $\ov{F}_0$ is the zero representation $0\stackrel{0}{\to} 0$.

\item For any ordinal $\nu$, the families $\{g_{\alpha\beta}:M_\alpha\to M_\beta\}_{\alpha\leq\beta<\nu}$ and 
$\{h_{\alpha\beta}:N_\alpha\to N_\beta\}_{\alpha\leq\beta<\nu}$ are continuous directed unions of morphisms; and the family of morphisms $\{f_\alpha:M_\alpha\to N_\alpha\}_{\alpha<\nu}$ is a morphism between both directed systems. I.e., 
$f_\beta\circ g_{\alpha\beta}=h_{\alpha\beta}\circ f_\alpha$, for each $\alpha\leq\beta<\nu$.

\item The induced morphism $$\frac{f_{\alpha+1}}{f_{\alpha}}:\frac{M_{\alpha+1}}{M_\alpha}\to \frac{N_{\alpha+1}}{N_\alpha},$$ is phantom, for each ordinal $\alpha$.

\item $|M_{\beta+1}/M_{\beta}|,|N_{\beta+1}/N_{\beta}|\leq\kappa$,
for each ordinal $\beta$.

\item If $\ov{F}_\beta\neq \ov{F}$, then the morphism $(g_{\beta,\beta+1},h_{\beta,\beta+1}):\ov{F}_\beta\to \ov{F}_{\beta+1}$ is not an isomorphism.
\end{enumerate}

Note that, in particular, this implies that there exists an ordinal $\lambda$ such that $\ov{F}_\lambda=\ov{F}$ because $M,N$ are sets. Therefore, if we are able to make the above construction, we will have shown that $\Phant$ is $\mathcal S_\kappa$-deconstructible, where $\mathcal S_\kappa$ is the set of all isomorphism classes of phantom morphisms or cardinality bounded by $\kappa$.  

We are going to make this construction by transfinite induction on $\alpha$. Let us set $\ov{F}_0\equiv 0\stackrel{0}{\to}0$. Fix now an arbitrary ordinal $\gamma>0$ and assume that we have constructed $\ov{F}_\beta$, $g_{\alpha\beta}$ and $h_{\alpha\beta}$ with the desired properties, for each $\alpha\leq\beta<\gamma$. We are going to construct then $\ov{F}_\gamma$ and $g_{\alpha\gamma}$ and $h_{\alpha\gamma}$, for each $\alpha\leq\gamma$ satisfying the above properties. 

\medskip
{\bf Case 1.}  Assume that $\gamma$ is a limit ordinal. Set then $M_\gamma=\underset{\alpha\leq\beta<\gamma}{\underrightarrow{\lim}}\, g_{\alpha\beta}$, 
$N_\gamma=\underset{\alpha\leq\beta<\gamma}{\underrightarrow{\lim}}\,h_{\alpha\beta}$\, and 
$f_\gamma:M_\gamma\to N_\gamma$, the morphism induced by the $\{f_\alpha:M_\alpha\to N_\alpha\}_{\alpha<\gamma}$. Let us call, for each $\alpha<\gamma$, $g_\alpha:M_\alpha\to M_\gamma$ and $h_\alpha:N_\alpha\to N_\gamma$ the structural morphisms of the direct limits. Set then $g_{\alpha\gamma}=g_\alpha$ and $h_{\alpha\gamma}=h_\alpha$, for each $\alpha<\beta$. It is straightforward to show that $\ov{F}_\gamma\equiv M_\gamma\stackrel{f_\gamma}{\to}N_\gamma$, $g_{\alpha\gamma}$ and $h_{\alpha\gamma}$, for $\alpha\leq\gamma$ (where $g_{\gamma\gamma}=1_{M_{\gamma}}$ and $h_{\gamma\gamma}=1_{N_{\gamma}}$) satisfy the desired properties.
\medskip

{\bf Case 2}. Assume now that $\gamma=\mu+1$ is a successor ordinal. 
If $\ov{F}_\mu=\ov{F}$, then we set 
$\ov{F}_\gamma=\ov{F}$, 
$g_{\mu\gamma}=1_M$, 
$h_{\mu\gamma}=1_N$, and $g_{\alpha\gamma}=g_{\mu\gamma}\circ g_{\alpha\mu}$ and
$h_{\alpha\gamma}=h_{\mu\gamma}\circ h_{\alpha\mu}$, for each $\alpha<\mu$, and $g_{\gamma\gamma}=1_{M}$ and $h_{\gamma\gamma}=1_{N}$.

Otherwise, the induced representation $\ov{F}/\ov{F_\mu}\equiv M/M_\mu\stackrel{f/f_{\mu}}{\longrightarrow}N/N_\mu$ is nonzero, in the sense that $M/M_\mu$ or $N/N_\mu$ are not zero. We claim that $f/f_\mu$ is a phantom morphism. As $\ov{F_\mu}$ is a pure subrepresentation of $\ov{F}$, we deduce that, in particular, the short exact sequence,
$$\xymatrix{0 \ar[r]& M_\mu \ar[r]& M \ar[r] & M/M_\mu \ar[r]& 0}$$
is pure. Then, for any finitely presented $R$-module $L$ and any morphism $L \to M/M_\mu$, there exists a morphism $L\to M$ which completes the following commutative diagram
$$\xymatrix{ &&&L \ar@{.>}[dl] \ar[d] & \\
0 \ar[r] & M_\mu \ar[r] \ar[d] & M \ar[r] \ar[d] & M/M_\mu \ar[r] \ar[d] & 0\\
0 \ar[r] & N_\mu \ar[r] & N \ar[r] & N/N_\mu \ar[r]& 0} $$
 Moreover, as $\ov{F} \in \Phantq$, the composition $L \to M \to N$ factors through a projective $R$-module, say $P$,
$$\xymatrix{ && L \ar[dll] \ar[d] \ar[ddl] & \\
M \ar[rr] \ar[dd] && M/M_\mu \ar[r] \ar[dd] & 0\\
&P \ar[dl]&& \\
N \ar[rr] & & N/N_\mu \ar[r]& 0} $$
This shows that  $L \to N/N_\mu$ also factors through $P$ and hence, we deduce that  $\ov{F}/\ov{F_\mu} \in \Phantq$.

Now, as $\ov{F}\neq \ov{F_\mu}$, there exists an $x \in  M\sqcup N\setminus M_\mu\sqcup N_\mu$. By Lemma~\ref{phantomsub}, there exists a pure subrepresentation $\ov{F'}/\ov{F_\mu}$ of $\ov{F}/\ov{F_{\mu}}$, where $\ov{F'}:M'\stackrel{f'}{\longrightarrow} N',$   such that $x \in M'\sqcup N'$, $x\notin  M_{\mu}\sqcup N_{\mu}$ and $|\frac{M'}{M_\mu}\sqcup \frac{N'}{N_\mu}| \leq \kappa$. Let us denote by $g_\mu':M_\mu\to M'$ and $h_\mu':N_\mu\to N'$ the embedding maps such that $h_\mu'\circ f_\mu=f'\circ g_\mu'$. It is then clear that $g_\mu'$ and $h_\mu'$ are pure embeddings, because $\ov{F_\mu}$ is a pure subrepresentation of $\ov{F}$ by our induction hypothesis. Moreover, as $\ov{F'}/\ov{F_\mu}$ is a pure subrepresentation of $\ov{F}/\ov{F_{\mu}}$ and $\ov{F_\mu}$ is a pure subrepresentation of $\ov{F}$, it is immediate to deduce that $\ov{F'}$ is a pure subrepresentaton of $\ov{F}$.

Finally, let us set 
$\ov{F}_\gamma=\ov{F'}$, 
$g_{\mu\gamma}=g_\mu'$, 
$h_{\mu\gamma}=h_\mu'$, and $g_{\alpha\gamma}=g_{\mu\gamma}\circ g_{\alpha\mu}$ and
$h_{\alpha\gamma}=h_{\mu\gamma}\circ h_{\alpha\mu}$, for each $\alpha<\mu$, and $g_{\gamma\gamma}=1_{M'}$ and $h_{\gamma\gamma}=1_{N'}$ It is easy to check that $\ov{F}_\gamma$, $g_{\alpha\gamma}$ and $h_{\alpha\gamma}$, for $\alpha\leq\gamma$ satisfy the desired properties.
\end{proof}

Recall that a representation $\ov{M}\equiv M_1\to M_2$ in $\mathcal A_2$ is of \emph{type $\kappa$} (for $\kappa$ an infinite cardinal number),
if each of the $R$-modules $M_1$ and $M_2$ are generated at most by $\kappa$ elements.

\begin{cor}\label{colim}
There exists an infinite cardinal $\kappa$ such that every representation in $\Phantq$ is the directed union of its pure subrepresentations in $\Phantq$ of type $\kappa$.
\end{cor}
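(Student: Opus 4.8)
The plan is to take a single cardinal, say $\kappa=\max(|R|,\aleph_0)$, and to show that for every $\ov{F}\equiv F_1\stackrel{\varphi}{\to}F_2$ in $\Phantq$ the family $\mathcal S(\ov F)$ of all pure subrepresentations of $\ov F$ that lie in $\Phantq$ and are of type $\kappa$ is directed under inclusion and has union $\ov F$. Both statements will come out almost immediately from Lemma~\ref{phantomsub}, whose hypothesis $\kappa\geq|R|$ is met by our choice of $\kappa$ uniformly in $\ov F$.

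First I would verify that $\bigcup\mathcal S(\ov F)=\ov F$. Given any element $x\in F_1\sqcup F_2$, apply Lemma~\ref{phantomsub} with $(X_1,X_2)=(\{x\},\emptyset)$ or $(\emptyset,\{x\})$ according to which vertex $x$ sits at; this produces a pure phantom subrepresentation $\ov S\equiv S_1\to S_2$ of $\ov F$ with $|S_1|,|S_2|\leq\kappa$, hence of type $\kappa$, and with $x\in S_1\sqcup S_2$. Thus every element of $F_1$ and of $F_2$ lies in some member of $\mathcal S(\ov F)$, so the union of the family is all of $\ov F$.

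Next I would check directedness. Given $\ov S\equiv S_1\to S_2$ and $\ov{S'}\equiv S_1'\to S_2'$ in $\mathcal S(\ov F)$, the subsets $X_1=S_1\cup S_1'\subseteq F_1$ and $X_2=S_2\cup S_2'\subseteq F_2$ have cardinality at most $\kappa$, so Lemma~\ref{phantomsub} yields a pure phantom subrepresentation $\ov{S''}\equiv S_1''\to S_2''$ of $\ov F$ of type $\kappa$ with $X_1\subseteq S_1''$ and $X_2\subseteq S_2''$. Since $\ov S$, $\ov{S'}$ and $\ov{S''}$ are all subrepresentations of $\ov F$ (their structure maps being restrictions of $\varphi$), $\ov S$ and $\ov{S'}$ are subrepresentations of $\ov{S''}$; and because purity is transitive in the relevant direction (a submodule which is pure in $F_i$ and is contained in the intermediate module $S_i''$ is pure in $S_i''$), they are in fact pure subrepresentations of $\ov{S''}$. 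This shows $\mathcal S(\ov F)$ is directed, and combining with the previous paragraph gives that $\ov F$ is the directed union of its pure subrepresentations in $\Phantq$ of type $\kappa$, since colimits in $\mathcal A_2$ are computed at each vertex and a directed union of modules is a filtered colimit.

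I do not anticipate a real obstacle: the only points requiring a moment's care are that one fixed $\kappa$ serves all $\ov F$ simultaneously (it does, as $\kappa=\max(|R|,\aleph_0)$ is admissible in Lemma~\ref{phantomsub} irrespective of $\ov F$) and the transitivity-of-purity bookkeeping used for directedness. As noted in the text, this corollary is exactly what is needed to feed Lemma~\ref{clos} and Theorem~\ref{main} into Theorem~\ref{prin} and conclude that $\Phant$ is a covering ideal.
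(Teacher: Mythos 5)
Your argument is correct and is exactly the intended derivation: the paper states this corollary without proof, as an immediate consequence of Lemma~\ref{phantomsub}, and your two steps (every element lies in some type-$\kappa$ pure phantom subrepresentation; any two such are contained in a third, again by Lemma~\ref{phantomsub} applied to the union of their underlying sets, which still has cardinality at most $\kappa$) are precisely how that lemma yields the statement. The choice $\kappa=\max(|R|,\aleph_0)$, uniform over all representations, and the remark on transitivity of purity for the inclusions within the directed family are both sound.
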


\begin{cor}\label{phantomcover}
Every module has a surjective phantom cover.
\end{cor}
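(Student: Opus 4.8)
The plan is to derive this from Theorem~\ref{prin} applied to the ideal $\mathcal I=\Phant$, and then to use that $\Phant$ contains enough epimorphisms to force the resulting cover to be surjective.

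First I would verify the two hypotheses of Theorem~\ref{prin} for $\Phant$. Lemma~\ref{clos} already gives that $\Phant$ is closed under direct limits, so it only remains to check that $\Phant$ is the closure under direct limits of a \emph{set} of phantom morphisms. This does not follow formally from the deconstructibility of Theorem~\ref{main} (a filtration exhibits a representation as a direct limit of its partial unions, but these need not be small), so instead I would use the sharper form recorded in Corollary~\ref{colim}: fixing the infinite cardinal $\kappa\geq|R|$ given there, let $\mathcal I_0$ be a set of representatives of the isomorphism classes of phantom morphisms of type $\kappa$. By Corollary~\ref{colim}, every $\ov F\in\Phantq$ is the directed union of its pure subrepresentations which lie in $\Phantq$ and are of type $\kappa$; each such subrepresentation is isomorphic to a member of $\mathcal I_0$, and the transition maps in this directed union are inclusions, so they constitute directed systems for which $\{f_i\}$ is a morphism. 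Hence the underlying phantom morphism of $\ov F$ is the direct limit of a directed system of morphisms drawn from $\mathcal I_0$, which is exactly the condition in the definition of ``closure under direct limits of a set''. Thus both hypotheses of Theorem~\ref{prin} hold, and we conclude that $\Phant$ is a covering ideal; in particular every left $R$-module $M$ admits a $\Phant$-cover $i\colon I\to M$.

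Second, I would show that such a cover (indeed any $\Phant$-precover of $M$) is automatically surjective. Choose a projective module $P$ together with an epimorphism $\pi\colon P\to M$. Since $P$ is flat, $\pi$ is a phantom morphism: any morphism out of a flat module is phantom, because a morphism from a finitely presented module into a flat module factors through a finitely generated projective, so its composite with $\pi$ factors through a projective — this is precisely the observation already used in Section~2 to see that representations with flat domain lie in $\Phantq$. As $i$ is a $\Phant$-precover, $\pi$ factors through $i$, say $\pi=i\circ\pi'$, whence $M=\Im\pi\subseteq\Im i$ and $i$ is surjective. Combining the two steps yields the surjective phantom cover and reproves \cite[Theorem~7]{Herzog1}; one may moreover check, as in \cite{Herzog1,Herzog2}, that $\Ker i$ is pure injective, although this is not needed for the statement.

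The only step carrying any real content is the first one, namely converting ``$\ov F$ is the directed union of its small pure subrepresentations in $\Phantq$'' from Corollary~\ref{colim} into the precise directed-system formulation demanded by Theorem~\ref{prin}. The remainder is a direct appeal to Theorem~\ref{prin}, Lemma~\ref{clos} and the elementary fact that epimorphisms from projective modules are phantom; all the genuine difficulty has already been absorbed into the transfinite construction of Theorem~\ref{main}.
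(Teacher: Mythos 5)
Your proof is correct and follows essentially the same route as the paper: the hypotheses of Theorem~\ref{prin} are verified via Lemma~\ref{clos} together with Corollary~\ref{colim} (taking $\mathcal I_0$ to be representatives of the type-$\kappa$ phantom morphisms), and surjectivity is obtained from the fact that an epimorphism out of a projective (hence flat) module is phantom and must factor through the precover — which is just the module-level form of the paper's remark that $\Phantq$ contains a projective generator of $\mathcal A_2$.
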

\begin{proof}
It follows from Lemma \ref{clos} and Corollary \ref{colim} that the class $\Phantq$ satisfies the conditions of Theorem \ref{prin}. Finally, phantom covers are surjective because the class $\Phantq$  contains a projective generator of $\mathcal A_2$.
\end{proof}

\begin{rem} Note that Theorem \ref{main} shows that every phantom map is $\mathcal S$-filtered (where $\mathcal S$ is a set of representatives of $\ov{M}\in \Phantq$ with cardinality less than or equal to $\kappa$). However, the converse is not true in view of Remark\ \ref{noext}.

Deconstructible classes of objects play a central role in the classical aproximation theory by objects. The reason is that any  deconstructible class $\mathcal F$ (in the sense of Definition \ref{decons})  in a Grothendieck category  $\mathcal A$, which is closed under $\mathcal F$-filtered objects, gives rise to the existence of special $\mathcal F^{\perp}$-preenvelopes and $\mathcal F$-precovers of objects in $\mathcal A$ \cite[Theorem 2.5 and 2.6]{EEGO} (indeed, special $\mathcal F$-precovers whenever we assume, in addition, that $\mathcal F$ contains a generator of the category). 
However, as Remark \ref{noext} shows, the class $\Phantq$ is not closed under extensions in $\mathcal A_2$ and thus, we cannot use the fact that 
$\Phant$ 
is deconstructible to infer the existence of special $\Phant$-precovers and special 
$\Phant^\perp$-preenvelopes in 
$R\Mod$.

\end{rem}

We are going to close this paper by showing that the kernel of a phantom cover is always a pure injective module. First we need to prove the following lemma, which is of independent interest.

\begin{lema}\label{indep}
Let $\phi:M\to N$ be a phantom epimorphism, with kernel $u:K\to M$ and let $v:K\to K'$ be a pure monomorphism. Let us consider the pushout along $u,v$:
 $$\xymatrix{ & 0\ar[d] & 0\ar[d] &  &\\
0\ar[r] & K\ar[d]_v\ar[r]^{u} & M \ar[r]^{\phi} \ar[d]^{v'}\ar[r] &N\ar@{=}[d]\ar[r] &0\\
0\ar[r] & K'\ar[d]_{\pi}\ar[r]_{u'} & X\ar[d]^{\pi'} \ar[r]_{\phi'} &N\ar[r] &0 \\ & K/K'\ar@{=}[r]\ar[d] & X/M\ar[d] &  &\\& 0 & 0 &  & }$$
Then $\phi'$ is also a phantom map.
\end{lema}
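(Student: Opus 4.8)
The plan is to verify directly that $\phi'\colon X\to N$ is a phantom map by testing the definition: given a finitely presented module $L$ and a morphism $\ell\colon L\to X$, we must factor $\phi'\circ\ell$ through a projective module. First I would exploit the pushout square: since the bottom row is the pushout of the top row along $v$, and $v$ is a pure monomorphism, the induced map $v'\colon M\to X$ is also a pure monomorphism (purity is preserved under pushout, and this is consistent with the snake-lemma column $0\to M\xrightarrow{v'}X\to X/M\to 0$ with $X/M\cong K'/K$; purity of $v$ gives purity of $v'$). Now, since $L$ is finitely presented and $v'$ is pure, the composite $L\xrightarrow{\ell}X\to X/M$ lifts: more precisely, because $M\xrightarrow{v'}X$ is a pure embedding, any map from a finitely presented module into $X/M$ lifts to $X$; but what I actually want is to split $\ell$ itself into a piece landing in $M$ plus a correction. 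The cleanest route: the image of $\ell$ composed to $X/M\cong K'/K$ is a finitely generated submodule, and purity of $K\hookrightarrow K'$ means finitely presented "pieces" of $K'/K$ lift to $K'$; combined with the pushout description $X=(M\oplus K')/\{(u(k),-v(k))\}$, one shows $\ell$ can be written as $v'\circ m + u'\circ \kappa$ for suitable maps $m\colon L\to M$ and $\kappa\colon L\to K'$ — here one uses finite presentation of $L$ to clear denominators and the defining relation of the pushout.

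Having done that, observe that $\phi'\circ u' = 0$ (the bottom row is a complex), so $\phi'\circ\ell = \phi'\circ v'\circ m = \phi\circ m$, using the commutativity $\phi'\circ v' = \phi$ from the pushout diagram. Now $m\colon L\to M$ with $L$ finitely presented, and $\phi$ is a phantom epimorphism by hypothesis, so $\phi\circ m$ factors through a projective module $P$, say $\phi\circ m = q\circ p$ with $p\colon L\to P$ and $q\colon P\to N$. Therefore $\phi'\circ\ell = q\circ p$ factors through the projective $P$ as well, which is exactly what we needed. Hence $\phi'$ is a phantom map.

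The main obstacle I expect is the first step: cleanly writing an arbitrary $\ell\colon L\to X$ in the form $v'\circ m + u'\circ\kappa$. This is where purity of $v\colon K\to K'$ is essential and where one must argue carefully with the explicit pushout $X=(M\oplus K')/\nabla(K)$ and the finite presentation of $L$ (to lift a finite set of generators and then check the finitely many relations can be corrected, using that $K\hookrightarrow K'$ is pure so systems of linear equations solvable over $K'$ that "should" come from $K$ can be adjusted). An alternative, perhaps smoother, formulation avoids this decomposition: apply $\Hom(L,-)$ to the column $0\to M\xrightarrow{v'} X\to X/M\to 0$; since this sequence is pure-exact, $\Hom(L,M)\to\Hom(L,X)\to\Hom(L,X/M)\to 0$ is exact for finitely presented $L$, but more usefully the pure-exactness lets us reduce the phantom test on $\phi'$ to the phantom test on $\phi$ directly: any $\ell\colon L\to X$ has $\phi'\circ\ell$ determined, modulo the image of $\Hom(L,M)\xrightarrow{v'}\Hom(L,X)\xrightarrow{\phi'}\Hom(L,N)$, by its image in $X/M$ which maps to $0$ in $N$; so $\phi'\circ\ell$ lies in the image of $\phi\circ\Hom(L,M)$, and we conclude by the phantom property of $\phi$ as before. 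I would present the argument in this second form, as it isolates exactly the purity input and keeps the diagram chase short.
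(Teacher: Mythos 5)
Your proposal is correct and is essentially the paper's own proof: the paper lifts $\pi'\circ\ell$ to a map $\kappa:L\to K'$ using purity of $v:K\to K'$ and finite presentation of $L$, writes $\ell-u'\circ\kappa=v'\circ m$ for some $m:L\to M$, and concludes that $\phi'\circ\ell=\phi\circ m$ factors through a projective. Do present the first of your two formulations, in which the lift of $\pi'\circ\ell$ lands in $K'$ (so that $\phi'\circ u'=0$ applies); lifting $\pi'\circ\ell$ merely to an arbitrary preimage in $X$, as your ``smoother'' second formulation suggests, would not by itself kill the correction term.
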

\begin{proof}
Let $F$ be a finitely presented module and consider a morphism $f:F\to X$. As the short exact sequence 
$$0\to K\stackrel{v}{\longrightarrow}K'\stackrel{\pi}{\longrightarrow}K/K'\to 0$$ 
is pure, there will exist a morphism $g:F\to K'$ such that $\pi\circ g=\pi'\circ f$. Then $\pi\circ g=\pi'\circ u'\circ g$ and thus, $\Im(f-u'g)\subseteq \Ker(\pi')=M$. Therefore, there exists a unique $h:F\to M$ such that $v'h=f-u'g$.
Now $$ \phi h=\phi'v'h=\phi'f-\phi'u'g'=\phi'f,$$ where the last equality holds because $\phi'u'=0$. Now, as $\phi$ is phantom, $\phi h$ factors through a projective module. This shows that $\phi' f$ factors through a projective module and thus, $\phi'$ is phantom.

\end{proof}

\begin{prop}\label{kernel_pinj}
Let $\phi:M\to N$ be a phantom cover. Then $\Ker(\phi)$ is a pure injective module.
\end{prop}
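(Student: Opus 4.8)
The plan is to use the defining property of a phantom \emph{cover}---minimality---against the lemma just proved. Let $\phi:M\to N$ be a phantom cover with kernel $u:K\to M$, and let $v:K\to K'$ be an arbitrary pure monomorphism. First I would form the pushout of $u$ along $v$, obtaining the commutative diagram of Lemma~\ref{indep}; in particular the bottom row $0\to K'\stackrel{u'}{\to}X\stackrel{\phi'}{\to}N\to 0$ is exact, and by Lemma~\ref{indep} the map $\phi':X\to N$ is again a phantom morphism (and it is an epimorphism, being the pushout of the epimorphism $\phi$).

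Next I would exploit that $\phi$ is an $\mathcal I$-\emph{precover} for $\mathcal I=\Phant$: since $\phi'\in\Phant$ and $\phi$ is a phantom precover of $N$, there is a morphism $t:X\to M$ with $\phi\circ t=\phi'$. Composing with the pushout map $v':M\to X$ gives $\phi\circ (t\circ v')=\phi'\circ v'=\phi$, so by the \emph{cover} property $t\circ v'$ is an automorphism of $M$. In particular $v'$ is a split monomorphism: writing $\theta=(t\circ v')^{-1}\circ t$, we have $\theta\circ v'=1_M$.

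Now I would chase the splitting down to $K$. Since $v'$ is split monic with retraction $\theta$, and the left-hand square $\phi'\circ v'=\phi$, $u'\circ v=v'\circ u$ commutes, restricting $\theta$ to the kernels yields a retraction $\rho:K'\to K$ of $v$, i.e. $\rho\circ v=1_K$. (Concretely: $\theta(u'(K'))$; one checks $\phi\circ\theta\circ u'=\phi'\circ u'=0$, so $\theta\circ u'$ lands in $\Ker\phi=K$, giving $\rho$ with $u\circ\rho=\theta\circ u'$, and then $u\circ\rho\circ v=\theta\circ u'\circ v=\theta\circ v'\circ u=u$, whence $\rho\circ v=1_K$ since $u$ is monic.) Thus \emph{every} pure monomorphism out of $K$ splits, which is precisely the characterization of $K$ being pure injective. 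This completes the proof.

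The main obstacle, and the step deserving the most care, is the passage from the splitting of $v'$ at the level of $M$ to the splitting of $v$ at the level of $K$: one must verify that the retraction $\theta$ of $v'$ genuinely restricts to a map $K'\to K$ compatible with $u,u'$, which uses $\phi'\circ u'=0$ and the injectivity of $u$. A secondary point to state carefully is that pure injectivity of a module $K$ is equivalent to every pure monomorphism $K\to K'$ splitting (standard, e.g. \cite{EO}); once that equivalence is invoked, the argument closes. Everything else---forming the pushout, invoking Lemma~\ref{indep}, and using the precover/cover properties of $\phi$---is routine.
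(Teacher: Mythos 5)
Your proof is correct and follows essentially the same route as the paper's: push out the kernel inclusion along the given pure monomorphism, invoke Lemma~\ref{indep} to see the resulting epimorphism onto $N$ is still phantom, use the precover property to obtain $t$ and the cover property to make $t\circ v'$ an automorphism, and then restrict to the kernels to split $v$. The only point you leave implicit is that $\phi$ is surjective (needed to apply Lemma~\ref{indep}), which the paper supplies via Corollary~\ref{phantomcover}; otherwise your kernel-chasing via $\theta=(t\circ v')^{-1}\circ t$ is just a slightly reorganized version of the paper's $r=(wv)^{-1}w$.
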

\begin{proof}
Let $K=\Ker(\phi)$ and $u:K\to M$, the inclusion. We must show that $K$ is pure injective. So let $u: K\to X$ be a pure monomorphism. We want to see that it admits a retract. As phantom covers are surjective, it follows from Lemma \ref{indep} that the pushout along $u$ and $v$,
$$\xymatrix{0\ar[r] & K\ar[d]_v\ar[r]^{u} & M \ar[r]^{\phi} \ar[d]^{v'}\ar[r] &N\ar@{=}[d]\ar[r] &0\\
0\ar[r] & X\ar[r]_{u'} & M' \ar[r]_{\phi'} &N\ar[r] &0   }$$
gives a phantom morphism $\phi':M'\to N$.
 This leads to commutative triangles:
$$\xymatrix{   M\ar[d]_{v'}\ar[dr]^{\phi}   &  \\
               M'\ar[r]^{\phi'}\ar[d]_{t}  & N \\
               M\ar[ur]_{\phi}       }$$ where $t:M'\to M$ comes from the fact that $\phi'$ is phantom and $\phi$ is a phantom (pre)cover. Let us denote by $w=t\upharpoonright_{X}$ the restriction of $t$ to $X\to K$.   As $\phi$ is a cover, the morphism $tv':M\to M$ is an automorphism. Hence, the restriction $w\circ v:K\to K$ is an automorphism of $K$ and therefore, $r=(wv)^{-1}w$ is the desired retract of $v$.
\end{proof}

\end{document}